\pgfplotsset{compat=1.11}
\newcolumntype{e}{>{\displaystyle}r @{\,} >{\displaystyle}c @{\,} >{\displaystyle}l}
\theoremstyle{plain}
\newtheorem{Theorem}{Theorem}[section]
\newtheorem{Corollary}[Theorem]{Corollary}
\newtheorem{Lemma}[Theorem]{Lemma}
\newtheorem{Problem}[Theorem]{Problem}
\theoremstyle{definition}
\newtheorem{Remark}[Theorem]{Remark}
\newcommand{\E}{\mathbb{E}}
\newcommand{\R}{\mathbb{R}}
\newcommand{\N}{\mathbb{N}}
\renewcommand{\P}{\mathbb{P}}
\newcommand{\eps}{\varepsilon}
\newcommand{\MB}{\partial \mathcal{M}}
\newcommand{\MBh}{\partial \mathcal{M}_\mathrm{harm}}
\newcommand{\bE}{\mathbf{E}}
\newcommand{\bP}{\mathbf{P}}
\newcommand{\cM}{\mathcal{M}}
\newcommand{\bW}{\mathbf{W}}
\newcommand{\dif}{\mathrm{d}}
\numberwithin{equation}{section}
  \newcounter{constant}
\begin{document}

\title{On the boundary at infinity for branching random walk}

\author{Elisabetta Candellero\footnote{ecandellero@mat.uniroma3.it;  Roma Tre University, Dept.\ of Mathematics and Physics, Rome, Italy.} \and Tom Hutchcroft\footnote{t.hutchcroft@caltech.edu;  Division of Physics, Mathematics and Astronomy, California Institute of Technology}}

\maketitle


\begin{abstract}
We prove that supercritical branching random walk on a transient graph converges almost surely under rescaling to a random measure on the Martin boundary of the graph. Several open problems and conjectures about this limiting measure are presented.
\end{abstract}


\section{Introduction}

The asymptotic behavior of Markov chains on graphs is at the core of potential theory, which blends together tools from probability, algebra and analysis.
Historically, much of this theory has focused on \emph{reversible} Markov chains, with particular emphasis given to random walks on graphs and groups following the influential early work of Furstenberg \cite{Furstenberg63,Furstenberg71a,Furstenberg71b}. A cornerstone of this theory is the \emph{Poisson boundary}, a measure-theoretic object encoding all possible (positive probability) limiting behaviours of the random walk and which, by a foundational theorem of Blackwell \cite{blackwell1955transient}, may be defined equivalently either in terms of the invariant $\sigma$-algebra or the space of bounded harmonic functions on the Markov chain. The \emph{Martin boundary} is a \emph{topological} boundary that both recovers the measure-theoretic Poisson boundary and also includes  information about possible \emph{singular} (probability zero) behaviours of the walk at infinity.

While the Poisson and Martin boundaries of a Markov chain are rather abstract and difficult to study in general, they are known to coincide with concrete geometric compactifications in several classes of examples including hyperbolic groups \cite{ancona1987negatively,ancona,kaimanovich2000poisson}, planar graphs \cite{G13,ABGN14,10.1214/17-EJP116}, lamplighter groups \cite{kaimanovich_vershik,erschler2010poisson,lyons2015poisson}, and Diestel-Leader graphs \cite{woess2005lamplighters}. 
In general, however, the structure of the Martin boundary might be richer than what one would guess geometrically \cite{PicardelloWoessBitree}.
For random walks on groups, various important and beautiful works have established characterizations and criteria for the non-triviality of the Poisson and Martin boundaries in terms of speed, entropy, heat kernel decay and volume growth \cite{avez1974theoreme,kaimanovich_vershik,peres2020groups,amir2017every,frisch2019choquet}. See \cite{woess2000,LP:book,zheng2021asymptotic} for overviews of the literature.


In this work we aim to initiate a systematic investigation of the boundary theory of \emph{branching random walks}.
Let $M=(S,P)$ be a Markov chain with countable state space $S$ and transition matrix $P$, and let $\mu$ be an \emph{offspring distribution}, that is, a probability measure supported on the non-negative integers.
Branching random walk (BRW) on $M$ is a Markov process governed by $P$ and $\mu$ in the following sense.
%
Choose any state $x\in S$ and at time $0$ place one alive particle at $x$.
Inductively, at each time step all alive particles reproduce independently of one another according to $\mu$ and then die, so that each particle is replaced by an independent random number of offspring with law $\mu$.
Subsequently, as part of the same time step, all (alive) particles take one step of the Markov chain, independently of one another, according to the transition kernel $P$. This defines a Markov process $(B_n)_{n\geq 0}$ with state space $\Omega = \{$finitely supported functions $S \to \N\}$, where $B_n(x)$ is the number of (alive) particles at the state $x$ at time $n$. One can also equivalently consider the branching random walk as a tree-indexed Markov chain where the underlying genealogical tree of the particles is itself a random Galton-Watson tree \cite{benjamini1994markov,MR1254826}. In particular, it follows from the classical theory of branching processes that the process dies almost surely if the mean $\bar \mu = \sum n \mu(n)$ satisfies $\bar \mu <1$ and survives forever with positive probability when $\bar \mu >1$. A detailed account of what is known about branching random walk on $\mathbb{Z}$, a rich subject that is largely disjoint from the kind of questions we consider here, can be found in \cite{shi2015branching}.



Previous works on the behaviour at infinity of branching random walk have focused either on the geometric properties of the \emph{trace} (that is, the graph spanned by the edges crossed by the process) \cite{benjamini_mueller,candellero_gilch_mueller,candellero_roberts_BRW,Hutchcroft-NonIntersectionBRW} or the set of points accumulated to by the process in some geometric compactification. The latter approach has been carried out primarily in the setting of trees and hyperbolic spaces, where a detailed and sophisticated theory has now been developed \cite{MR1452555,MR1641015,hueter_lalley,lalley-sellke_hawkes,lalley2006weak,candellero_gilch_mueller,MR3087391,MR3194496,sidoravicius2020limit}.

Our main theorem states that branching random walks always converge under suitable rescaling to a random measure on the Martin boundary. We are optimistic that this random measure is both an interesting object of study and its own right and should lead to new perspectives and clarity  on the limit-set theory in the hyperbolic case. 
We write $\P_x$ and $\E_x$ for probabilities and expectations taken with respect to the law of the branching random walk started with a single particle at $x$ and write $\mathbf{P}_x$ and $\mathbf{E}_x$ for probabilities and expectations taken with respect to the underlying Markov chain started at $x$.

\begin{Theorem}
\label{thm:convergence}
Let $M=(S,P)$ be a transient, irreducible Markov chain, let $\mu$ be an offspring distribution with mean $\bar \mu>1$ satisfying the \emph{$L \log L$ condition} $\sum_{n=1}^\infty \mu(n)n\log n <\infty$, and let $B=(B_n)_{n\geq 0}$ be a branching random walk on $M$ started at some state $o$. Then $(\bar \mu)^{-n} B_n$ almost surely converges weakly  to a random measure $\mathbf{W}$ on the Martin compactification $\cM$ of $M$ that is supported on the harmonic Martin boundary of $M$ and satisfies
\begin{equation}
\label{eq:ExpectationIdentity}
\mathbb{E}_{o}\mathbf{W}(A)=\mathbf{P}_o(X_\infty \in A)
\end{equation}
for every Borel set $A \subseteq \mathcal{M}$.
\end{Theorem}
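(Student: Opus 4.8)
\emph{Overview and setup.} The plan is to decompose an arbitrary continuous test function on $\M$ into a harmonic part, which we control by a martingale, and a remainder, which we show becomes asymptotically invisible because the branching random walk concentrates its mass near the harmonic boundary. Write $\mu_n := (\bar\mu)^{-n}B_n$ for the rescaled configuration at time $n$, viewed as a finite measure on the (compact metrizable) Martin compactification $\M$; write $X^v_n$ for the positions of the particles alive at time $n$; and let $\nu_x := \bP_x(X_\infty \in \cdot)$ denote the exit measure of the Markov chain, a probability measure on $\MBh$. The first ingredient is that for \emph{any} harmonic function $h$ on $S$, the branching property and harmonicity make $M^h_n := (\bar\mu)^{-n}\sum_{v} h(X^v_n)$ a martingale. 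Taking $h\equiv 1$ gives the Kesten--Stigum martingale $W_n = (\bar\mu)^{-n}\sum_{x\in S} B_n(x)$, which by the classical theory and the $L\log L$ hypothesis converges almost surely and in $L^1$ to a limit $W_\infty$ with $\E_o W_\infty = 1$; in particular $(W_n)_n$ is uniformly integrable. Hence for every \emph{bounded} harmonic $h$ the martingale $M^h_n$, being dominated by $\|h\|_\infty W_n$, is uniformly integrable, and so converges almost surely and in $L^1$ to a limit $M^h_\infty$ with $\E_o M^h_\infty = M^h_0 = h(o)$.

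\emph{Compactness and escape to the harmonic boundary.} Since $\M$ is compact metrizable and $\sup_n \mu_n(\M) = \sup_n W_n < \infty$ almost surely, the sequence $(\mu_n)_n$ is almost surely precompact in the weak topology on finite measures; it therefore suffices to identify all subsequential limits and show they agree. Next, the many-to-one identity $\E_o\big[\sum_v f(X^v_n)\big] = (\bar\mu)^n\,\bE_o[f(X_n)]$ applied to indicators gives $\E_o[\mu_n(C)] = \bP_o(X_n\in C)$ for every measurable $C\subseteq\M$; if $C$ is closed and disjoint from $\MBh$ then, because $X_n \to X_\infty \in \MBh$ almost surely, $\bP_o(X_n\in C)\to 0$, so $\mu_n(C)\to 0$ in $L^1$. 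Writing $\M\setminus\MBh$ as a countable union of open sets whose closures avoid $\MBh$ (and $S$ as the union of its open singletons), a short argument combining this $L^1$-convergence with the portmanteau theorem and precompactness shows that, almost surely, every subsequential limit of $(\mu_n)_n$ is a finite measure supported on $\MBh$; in particular $\mu_n(\M\setminus V)\to 0$ almost surely for every open $V\supseteq\MBh$.

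\emph{The harmonic decomposition and conclusion.} Fix $\varphi\in C(\M)$ and set $h_\varphi(x) := \int_\M \varphi\,\mathrm{d}\nu_x = \bE_x[\varphi(X_\infty)]$, a bounded harmonic function on $S$. By standard regularity of the Martin boundary, $\nu_x \Rightarrow \delta_\zeta$ as $x\to\zeta$ for $\zeta\in\MBh$, so $h_\varphi(x)\to\varphi(\zeta)$; thus the bounded function $g := \varphi - h_\varphi$ on $S$ tends to $0$ along every sequence in $S$ converging to a point of $\MBh$. Given $\eps>0$, pick an open $V\supseteq\MBh$ with $|g|<\eps$ on $V\cap S$; then $\M\setminus V$ is closed and disjoint from $\MBh$, so
\[
\limsup_{n\to\infty}\big|\langle \mu_n,\varphi\rangle - M^{h_\varphi}_n\big| = \limsup_{n\to\infty}|\langle\mu_n,g\rangle| \le \limsup_{n\to\infty}\big(\eps\,W_n + 2\|\varphi\|_\infty\,\mu_n(\M\setminus V)\big) = \eps\,W_\infty,
\]
and letting $\eps\downarrow 0$, combined with $M^{h_\varphi}_n\to M^{h_\varphi}_\infty$, yields $\langle\mu_n,\varphi\rangle\to M^{h_\varphi}_\infty$ almost surely. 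Running this along a countable dense subset of $C(\M)$ and using precompactness, $\mu_n$ converges almost surely weakly to a random finite measure $\bW$ on $\M$ with $\langle\bW,\varphi\rangle = M^{h_\varphi}_\infty$ for all $\varphi\in C(\M)$; being a subsequential limit, $\bW$ is supported on $\MBh$. Taking expectations, $\E_o\langle\bW,\varphi\rangle = \E_o M^{h_\varphi}_\infty = h_\varphi(o) = \bE_o[\varphi(X_\infty)] = \int_\M\varphi\,\mathrm{d}\nu_o$ for all $\varphi\in C(\M)$, so the intensity measure of $\bW$ equals the law of $X_\infty$, giving~\eqref{eq:ExpectationIdentity} for all Borel $A$.

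\emph{Main obstacle.} The crux is the escape-to-$\MBh$ step together with the control of the non-harmonic remainder $g$: one must show that the branching random walk retains no asymptotically non-negligible mass near the non-harmonic part of the Martin boundary nor on the (discrete) state space, and must upgrade the resulting $L^1$/in-probability statements to almost-sure control of every subsequential limit. The $L\log L$ hypothesis enters precisely here, via the Kesten--Stigum theorem, to supply the uniform integrability that both keeps the martingale limits $M^h_\infty$ from degenerating and lets the error bound above collapse; a secondary technical point is the Martin-boundary regularity $\nu_x\Rightarrow\delta_\zeta$ needed to make $g$ vanish on $\MBh$.
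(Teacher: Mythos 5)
There is a genuine gap at the heart of your harmonic-decomposition step. You assert, as ``standard regularity of the Martin boundary,'' that $\nu_x \Rightarrow \delta_\zeta$ as $x\to\zeta$ for every $\zeta\in\MBh$, so that the harmonic extension $h_\varphi$ of a continuous $\varphi$ converges to $\varphi(\zeta)$ along every sequence in $S$ tending to a point of $\MBh$, making $g=\varphi-h_\varphi$ small on a neighbourhood of $\MBh$. This is not a theorem: points of the Martin boundary, even points in the support of the exit law, need not be regular in this sense, and the harmonic extension of a continuous function need not extend continuously to the Martin compactification (the paper warns of exactly this pathology at the end of its background section). The probabilistic Fatou theorem only gives $h_\varphi(X_n)\to\varphi(X_\infty)$ along random walk trajectories, which is much weaker than the topological continuity you invoke. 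Since your entire control of the non-harmonic remainder $g$ rests on choosing an open $V\supseteq\MBh$ with $|g|<\eps$ on $V\cap S$, the argument collapses without this regularity. The paper's proof is structured precisely to avoid any such boundary-continuity claim: for a closed $V\subseteq\MB$ it compares the harmonic extension $h$ of $\mathbbm{1}_V$ with the superharmonic hitting function $f(x)=\bP_x(\text{ever visit }U)$ of a slightly larger open set $U$, uses supermartingale convergence of $\langle f,\bar B_n\rangle$ together with $\E[\lim\langle f-h,\bar B_n\rangle]\le f(o)-h(o)\le\eps$ and Markov's inequality, and then passes to open sets and continuous functions by complementation and to all test functions simultaneously by separability of $C(\MBh)$.

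A second, related soft spot is your ``short argument'' that every subsequential limit is a.s.\ supported on $\MBh$. The many-to-one identity only gives $\E[\mu_n(C)]=\bP_o(X_n\in C)\to 0$, i.e.\ convergence in $L^1$ (hence in probability) of $\mu_n(C)$ for each fixed closed $C$ disjoint from $\MBh$; since the weakly convergent subsequence is random, this does not by itself control $\bW(C)$ for every subsequential limit $\bW$ almost surely, and the portmanteau inequality for closed sets points in the unhelpful direction. Upgrading in-probability bounds to almost-sure statements quantified over all subsequential limits is exactly where the paper needs the supermartingale convergence plus the $\eps/\delta$ Markov-inequality step and the countable-dense-family quantifier exchange; you acknowledge this upgrade as the ``main obstacle'' but do not supply it. (A minor further point: the $L\log L$ hypothesis is not what makes the escape or the remainder estimate work---the weak convergence holds without it, the limit then being the zero measure---it is needed only for the uniform integrability behind the identity $\E_o\mathbf{W}(A)=\bP_o(X_\infty\in A)$.)
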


A selection of open problems regarding this random measure are discussed in Section~\ref{sec:problems}.

\medskip

The so-called `$L \log L$ condition' $\sum_{n=1}^\infty \mu(n)n\log n <\infty$ appearing here is needed for the limiting measure $\bW$ to be non-zero. Indeed, if $\mu$ is supercritical and $(|B_n|)_{n\geq 0}$ is the underlying  branching process of the branching random walk $(B_n)_{n\geq 0}$ then the limit
\[
W:= \lim_{n\to \infty} (\bar \mu)^{-n} |B_n|
\]
exists almost surely by the martingale convergence theorem. The Kesten-Stigum theorem \cite{KestenStigum66,LPP95} states that the following are equivalent:
\begin{enumerate}
  \item $W>0$ with positive probability.
  \item $W>0$ almost surely on the event that $(|B_n|)_{n\geq 0}$ survives forever.
  \item $\E W =1$.
  \item $(\bar \mu)^{-n} |B_n|$ is uniformly integrable.
  \item $\mu$ satisfies the $L\log L$ condition.
\end{enumerate}
Since any measure arising as the weak limit of $(\bar \mu)^{-n} B_n$ must have total mass $W$, it follows that if $\mu$ does \emph{not} satisfy the $L \log L$ condition then $(\bar \mu)^{-n} B_n$ almost surely converges weakly to the zero measure. (In particular, the conclusions of Theorem~\ref{thm:convergence} other than the formula \eqref{eq:ExpectationIdentity} hold vacuously in this case.)
\begin{Remark}
\label{remark:tail}
The Martin boundary of \emph{branching processes} (and hence of the Markov chain governing the \emph{number} of particles in a branching random walk) has been investigated in \cite{Overbeck} and \cite{Lootgieter}, whose results imply in particular that the random variable $W=\lim_{n\to\infty} (\bar\mu)^{-n}|B_n|$ generates the entire tail $\sigma$-algebra of the branching process $(|B_n|)_{n\geq 0}$. For more recent results we refer the reader to \cite{Abraham-Delmas} and references therein. While it is natural to guess from this that the random measure $\bW$ encodes all the tail information of the branching random walk $(B_n)_{n\geq 0}$, we observe in Section~\ref{sec:problems} that this is not the case even in very simple examples.
\end{Remark}

\begin{Remark}
While we were in the final stages of preparing this paper, Kaimanovich and Woess \cite{KW2022} posted a preprint to the arXiv establishing, in independent work, similar results on the convergence of branching random walks to random measures; their results allow one to consider compactifications other than the Martin compactification and branching Markov chains where the offspring distribution depends on the location of the particle.
\end{Remark}

\section{Background on the Martin boundary} 
We now briefly recall the definition and basic properties of the Martin boundary, referring the reader to \cite{woess2000} and \cite{dynkin1969boundary} for further details.
Let $P$ be the transition matrix of an irreducible, transient Markov chain $M=(S,P)$ on a countable state space $S$. 
Recall that a function $h: S \to \R$ that is either non-negative or bounded\footnote{Throughout this section, we use the assumption that our functions are either non-negative or bounded to guarantee that all sums appearing are well-defined in $(-\infty,\infty]$.} is said to be \textbf{harmonic} if $h(x) = \sum_{y\in S} P(x,y)h(y)$ for every $x\in S$ and is said to be \textbf{superharmonic} if $h(x) \geq \sum_{y\in S} P(x,y)h(y)$ for every $x\in S$. The space of all non-negative superharmonic functions on $S$ is denoted by $\mathcal{S}^+$.  For each $y \in S$, the \textbf{Martin kernel} $K(\,\cdot\,,y)$ is the non-negative superharmonic function defined by
\[
K(x,y) = \frac{G(x,y)}{G(o,y)},
\]
where $G(x,y) = \sum_{n\geq 0}P^n(x,y)$ is the Green's function. The \textbf{Martin compactification} $\mathcal{M}=\mathcal{M}(M)$  is defined to be the unique smallest compactification of the discrete set $S$ for which all the Martin kernels $K(x,\,\cdot\,)$ extend continuously. A detailed explanation of how this property uniquely determines $\mathcal{M}$ can be found in \cite[Section 24]{woess2000}, where it is also shown that $\mathcal{M}$ is a Polish space.
Thus, a sequence of states $(y_n)_{n\geq 0}$ converges in the Martin compactification if and only if the Martin kernels $K(x,y_n)$ converge for each $x\in S$. Note that this does indeed give a compact space since $G(o,y) \geq \sup_n P^n(o,x) G(x,y)$ for every $x,y\in S$ and hence $0\leq K(x,y) \leq (\sup_n P^n(o,x))^{-1}<\infty$ for every $x,y\in S$. The \textbf{Martin boundary} of $M$ is the space $\partial \mathcal{M} = \mathcal{M} \setminus S$. Note that if $\xi \in \partial{M}_M$ then we can define the Martin kernel $K(\,\cdot\,,\xi):= \lim_{n\to\infty} K(\,\cdot\,,y_n)$ where $(y_n)_{n\geq 0}$ is any sequence of states converging to $\xi$; the choice of sequence does not affect the limit thus obtained.
\medskip

The most important properties of the Martin boundary are as follows:
\begin{enumerate}
\item Convergence of the random walk \cite[Theorem 24.10]{woess2000}: If $(X_n)_{n\geq 0}$ is a trajectory of the Markov chain $M$ then $X$ converges almost surely to a limit point $X_\infty \in \partial \mathcal{M}$. Moreover, the Martin kernel $K(x,\,\cdot\,)$ is the Radon-Nikodym derivative of the law of $X_\infty$ under $\mathbf{P}_x$ with respect to the law of $X_\infty$ under $\mathbf{P}_o$ in the sense that
\[
\mathbf{E}_x \left[F(X_\infty)\right] = \mathbf{E}_o \left[F(X_\infty) K(x,X_\infty)\right]
\]
for every $x\in S$ and every continuous function $F:\partial \mathcal{M} \to [0,\infty)$.
\item Representation of positive harmonic functions \cite[Theorem 24.7]{woess2000}: If $h:S \to [0,\infty)$ is a non-negative harmonic function then there exists a finite Borel measure $\nu_h$ on $\partial \mathcal{M}$ such that
\[
h(x) = \int K(x,\xi) \mathrm{d} \nu_h(\xi)
\] 
for every $x\in S$. Conversely, if $\nu$ is a finite Borel measure on $\partial \mathcal{M}$ then $\int K(x,\xi) \mathrm{d} \nu(\xi)$ is a non-negative harmonic function on $S$. The function $h$ is bounded if and only if $\nu_h$ may be taken  to be absolutely continuous with respect to the law of $X_\infty$ with essentially bounded Radon-Nikodym derivative; in particular, if $h:S \to \R$ is a bounded harmonic function then there exists a bounded Borel function $\phi:\partial \mathcal{M} \to \R$ such that
\[
h(x) = \mathbf{E}_o \left[f(X_\infty) K(x,X_\infty)\right] = \mathbf{E}_x \left[f(X_\infty)\right]
\]
for every $x\in S$ \cite[Theorem  24.12]{woess2000}. In both cases, the representation is not unique in general but can be made unique by restricting the boundary data to be supported on the \emph{minimal} Martin boundary \cite[Theorem 24.9]{woess2000}.
\item Probabilistic Fatou Theorem \cite[Theorem 10]{dynkin1969boundary}: If $\phi:\MB(M)\to \R$ satisfies $\bE_o|\phi(X_\infty)|<\infty$ and $h(x)=\bE_x \phi(X_\infty)$ denotes the harmonic extension of $\phi$ to $S$ then
\[
h(X_n) \to \phi(X_\infty)
\] 
almost surely as $n\to\infty$.
\end{enumerate}

\begin{Remark}
Let us warn the reader that the Martin boundary can be quite ill-behaved in general, and in particular that the harmonic extension of a continuous function need not be continuous on the Martin compactification.
\end{Remark}

\section{Proof of the convergence theorem}





For the remainder of this section we fix a countable-state Markov chain $M=(S,P)$ and a vertex $o$ such that $\sup_n P^n(o,x)>0$ for every $x\in S$, fix an offspring measure $\mu$ satisfying the $L\log L$ condition, and let $B_n$ be a branching random walk on $M$ with offspring measure $\mu$, started with a single particle at $o$. We write $\bar B_n = (\bar \mu)^{-n} B_n$ for each $n\geq 0$ and, given a function $f:S\to \R$, write $\langle f,\bar B_n \rangle := \sum_{x\in S} f(x) \bar B_n(x)$.

\begin{Lemma}
\label{lem:martingale}
If $h:S\to \R$ is harmonic and is either bounded or non-negative then $( \langle h,\bar B_n\rangle)_{n\geq 0}$ is a martingale with respect to the filtration generated by $(B_n)_{n\geq 0}$. Similarly, 
if $h:S\to \R$ is superharmonic and is either bounded or non-negative then $( \langle h,\bar B_n\rangle)_{n\geq 0}$ is a supermartingale with respect to the filtration generated by $(B_n)_{n\geq 0}$.
\end{Lemma}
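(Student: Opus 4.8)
The plan is to condition on the whole configuration $B_n$ and compute the one-step conditional expectation directly. Write $\mathcal{F}_n=\sigma(B_0,\dots,B_n)$ and $Ph(x):=\sum_{y\in S}P(x,y)h(y)$. Given $\mathcal{F}_n$, the generation at time $n+1$ is obtained by replacing each of the $B_n(x)$ particles sitting at a state $x$ by an independent $\mu$-distributed number $N_{x,i}$ of children, each of which then performs an independent $P(x,\cdot)$-distributed jump to a state $Y_{x,i,j}$; thus
\[
\langle h,B_{n+1}\rangle=\sum_{x\in S}\;\sum_{i=1}^{B_n(x)}\;\sum_{j=1}^{N_{x,i}}h(Y_{x,i,j})
\]
conditionally on $\mathcal{F}_n$, where the $N_{x,i}$ are i.i.d.\ with law $\mu$ and, independently of them, the $Y_{x,i,j}$ are independent with $Y_{x,i,j}\sim P(x,\cdot)$, everything independent of $\mathcal{F}_n$. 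Because $B_n$ is finitely supported the outer sum is finite, so by linearity of conditional expectation and Wald's identity (applicable since $\bar\mu<\infty$, with $h$ bounded; or by Tonelli when $h\geq0$) one gets $\E[\langle h,B_{n+1}\rangle\mid\mathcal{F}_n]=\bar\mu\sum_{x}B_n(x)\,Ph(x)$. When $h$ is harmonic this equals $\bar\mu\langle h,B_n\rangle$; when $h$ is superharmonic and either non-negative or bounded one has $Ph\leq h$ pointwise and $B_n\geq 0$, so it is $\leq\bar\mu\langle h,B_n\rangle$. Dividing through by $(\bar\mu)^{n+1}$ gives exactly the (super)martingale relation $\E[\langle h,\bar B_{n+1}\rangle\mid\mathcal{F}_n]=(\text{resp. }\leq)\;\langle h,\bar B_n\rangle$.

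First, though, I would record the integrability needed for these statements to make sense. Taking $h\equiv 1$ in the computation above and inducting gives $\E|B_n|=(\bar\mu)^n$ for every $n$, hence $\E\langle 1,\bar B_n\rangle=1$. Consequently, if $|h|\leq C$ then $|\langle h,\bar B_n\rangle|\leq C\langle 1,\bar B_n\rangle\in L^1$; and if $h\geq0$ is superharmonic then $0\leq\langle h,\bar B_n\rangle$ and the same induction (now using $Ph\leq h$) gives $\E\langle h,\bar B_n\rangle\leq h(o)<\infty$. In the bounded case $C|B_{n+1}|$ also serves as an integrable dominating variable justifying the use of dominated convergence inside the Wald step.

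The only point requiring genuine care is the conditional-independence bookkeeping in the displayed identity: making precise, from the definition of the branching random walk as a Markov process on $\Omega$, that given $\mathcal{F}_n$ the reproduction variables and the displacement variables are independent of one another and of $\mathcal{F}_n$ with the stated laws, so that Wald's identity and Tonelli's theorem may be applied term by term. Everything else is routine, and I would treat the bounded and non-negative cases in parallel throughout. Note that the $L\log L$ hypothesis standing in force for this section plays no role in this lemma; it is only the Kesten--Stigum theory invoked later that needs it.
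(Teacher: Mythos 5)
Your argument is correct and is essentially the paper's proof: both compute the one-step conditional expectation $\E[\langle h,\bar B_{n+1}\rangle\mid\mathcal{F}_n]=\langle Ph,\bar B_n\rangle$ and then use $Ph=h$ (resp.\ $Ph\leq h$), the paper doing this in one line via $\E[B_{n+1}(x)\mid\mathcal{F}_n]=\bar\mu\sum_y B_n(y)P(y,x)$ while you unpack the same identity through the offspring/displacement variables and Wald/Tonelli. Your added remarks on integrability (domination by $C\langle 1,\bar B_n\rangle$ in the bounded case, $\E\langle h,\bar B_n\rangle\leq h(o)$ in the non-negative case) are fine and consistent with the paper's implicit treatment.
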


\begin{proof}[Proof of Lemma~\ref{lem:martingale}]
Let $\mathcal{F}_n$ be the $\sigma$-algebra generated by $(B_i)_{i=0}^n$. If $h:S\to \R$ is either bounded or non-negative then we have by linearity of expectation that
\[\E\left[ \langle h, \bar B_{n+1} \rangle \mid \mathcal{F}_n\right] = \sum_{x \in S} h(x) (\bar \mu)^{-n-1} \E\left[ B_{n+1}(x) \mid \mathcal{F}_n \right] = (\bar\mu)^{-n}\sum_{x,y\in S} B_n(y) P(y,x) h(x) = \langle Ph, \bar B_n \rangle \]
where in the second equality we have used that 
$\E\left[ B_{n+1}(x) \mid \mathcal{F}_n \right] = \bar \mu \sum_{y\in S} B_n(y) P(y,x)$ by linearity of expectation and the definition of branching random walk. The claim follows since $Ph=h$ when $h$ is harmonic and $Ph \leq h$ when $h$ is superharmonic.
\end{proof}

\begin{Corollary}
\label{cor:martingale_convergence}
If $h:S\to \R$ is superharmonic and is either bounded or non-negative then the sequence $\langle h,\bar B_n \rangle$ converges almost surely as $n\to\infty$.
\end{Corollary}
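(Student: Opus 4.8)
The plan is to deduce this directly from Lemma~\ref{lem:martingale} together with the supermartingale convergence theorem, treating the non-negative and bounded cases separately since the latter requires a small extra manipulation. First I would handle the case where $h$ is non-negative superharmonic. Here Lemma~\ref{lem:martingale} already tells us that $(\langle h, \bar B_n\rangle)_{n\ge 0}$ is a non-negative supermartingale, so Doob's supermartingale convergence theorem immediately yields almost sure convergence to a finite limit. Nothing further is needed in this case.

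Next I would treat the case where $h$ is bounded, say $|h(x)|\le C$ for all $x\in S$. The point to be careful about is that $(\langle h,\bar B_n\rangle)_{n\ge0}$ is generally \emph{not} a bounded process — indeed $\langle \mathbf{1},\bar B_n\rangle = (\bar\mu)^{-n}|B_n|$ can grow unboundedly along the survival event — so one cannot quote a convergence theorem for $\langle h,\bar B_n\rangle$ directly. The fix is to shift by a large constant: since constant functions are harmonic, the function $g := C - h$ satisfies $Pg = C - Ph \ge C - h = g$, so $g$ is superharmonic, and it is non-negative because $h\le C$. By the first case, $\langle g,\bar B_n\rangle$ converges almost surely. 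Similarly $\langle \mathbf{1},\bar B_n\rangle = (\bar\mu)^{-n}|B_n|$ converges almost surely, either by applying the first case to the non-negative harmonic function $\mathbf{1}$ or by citing the classical martingale convergence result for the underlying branching process. Writing
\[
\langle h,\bar B_n\rangle = C\,\langle \mathbf{1},\bar B_n\rangle - \langle g,\bar B_n\rangle
\]
then exhibits the sequence of interest as a difference of two almost surely convergent sequences, and hence it converges almost surely.

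I do not expect any real obstacle here; the corollary is essentially a direct consequence of Lemma~\ref{lem:martingale}. The only mild subtlety worth flagging in the write-up is the one just described: in the signed bounded case the process is not bounded, so one should pass to $C-h$ (non-negative and superharmonic) before invoking the supermartingale convergence theorem, and then recombine using that the total-mass process $(\bar\mu)^{-n}|B_n|$ also converges.
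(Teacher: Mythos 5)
Your overall strategy (reduce everything to Lemma~\ref{lem:martingale} plus martingale convergence, splitting the non-negative and bounded cases) matches the paper, which treats the corollary as an immediate consequence of the lemma; the non-negative case is handled correctly. However, your bounded case contains a sign error that breaks the step as written: if $h$ is superharmonic then $Ph\le h$, so for $g:=C-h$ you get $Pg = C-Ph \ge C-h = g$, which is exactly the statement that $g$ is \emph{sub}harmonic, not superharmonic (you even display the inequality $Pg\ge g$ and then read it the wrong way). Consequently $\langle g,\bar B_n\rangle$ is a submartingale, and your ``first case'' (almost sure convergence of non-negative supermartingales, which needs no integrability hypothesis) does not apply to it; a non-negative submartingale converges only if one additionally supplies an $L^1$ bound, which you never check.

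The repair is immediate and keeps your architecture intact: shift in the other direction. If $|h|\le C$, then $h+C$ is superharmonic (sum of the superharmonic $h$ and the harmonic constant $C$) and non-negative, so by your first case $\langle h+C,\bar B_n\rangle$ converges almost surely; since $\langle \mathbf{1},\bar B_n\rangle=(\bar\mu)^{-n}|B_n|$ also converges almost surely, the identity $\langle h,\bar B_n\rangle = \langle h+C,\bar B_n\rangle - C\langle \mathbf{1},\bar B_n\rangle$ gives the claim. Alternatively, you can avoid any shift: by Lemma~\ref{lem:martingale} the process $\langle h,\bar B_n\rangle$ is a supermartingale, and $|\langle h,\bar B_n\rangle|\le C\langle \mathbf{1},\bar B_n\rangle$ with $\E\langle \mathbf{1},\bar B_n\rangle=1$, so it is bounded in $L^1$ and Doob's convergence theorem applies directly. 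Either of these fixes yields a complete proof; as written, though, the justification for the bounded case is invalid.
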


\begin{Lemma}
\label{lem:closed_upper}
Let $V \subseteq \MB$ be closed and let $h(x):=\bP_x(X_\infty \in V)$ be the harmonic extension of $\mathbbm{1}_V$ to $S$. Then
  \[
\sup\{\mathbf{W}(V) : \mathbf{W} \text{ is a subsequential weak limit of $\bar B_n$}\} \leq \lim_{n\to\infty} \langle h, \bar B_n\rangle 
\]
almost surely.
\end{Lemma}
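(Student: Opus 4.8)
The plan is to sandwich $\mathbbm{1}_V$ between the harmonic extensions of continuous functions, and to compare, for a continuous $F$, the branching–random–walk average $\langle F|_S,\bar B_n\rangle$ with the average $\langle g,\bar B_n\rangle$ of its harmonic extension $g(x)=\bE_x[F(X_\infty)]$. Since $\cM$ is compact and metrizable, fix a compatible metric $\rho$ and for $j\geq 1$ put $F_j(p):=(1-j\rho(p,V))^+$; this is continuous on $\cM$, equals $1$ on $V$, takes values in $[0,1]$, and decreases pointwise to $\mathbbm{1}_V$ as $j\to\infty$. Let $g_j(x):=\bE_x[F_j(X_\infty)]$, a bounded harmonic function with $h\leq g_j$ and $g_j\downarrow h$ pointwise. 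By Lemma~\ref{lem:martingale} the processes $\langle g_j,\bar B_n\rangle$ and $\langle h,\bar B_n\rangle$ are non-negative martingales, and as they are dominated by $\langle 1,\bar B_n\rangle=(\bar\mu)^{-n}|B_n|$ — uniformly integrable by the Kesten--Stigum theorem — they converge a.s.\ and in $L^1$, say to $L_{g_j}$ and $L$, with $\E L_{g_j}=g_j(o)$ and $\E L=h(o)$. Since $g_j\downarrow h$ we have $\langle g_j,\bar B_n\rangle\downarrow\langle h,\bar B_n\rangle$ for each $n$, so $L_{g_j}$ decreases to some $\widetilde L\geq L$; and $\E\widetilde L=\lim_j g_j(o)=h(o)=\E L$ forces $L_{g_j}\downarrow L$ a.s.

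The crux is the claim that, for each fixed $j$, the function $D:=F_j|_S-g_j$ satisfies $\langle D,\bar B_n\rangle\to 0$ almost surely. Granting this, work on the a.s.\ event where $\langle D,\bar B_n\rangle\to 0$ and $\langle g_j,\bar B_n\rangle\to L_{g_j}$: if $\bar B_{n_k}\to\bW$ weakly then, since $F_j$ is bounded and continuous on $\cM$ and $\bar B_{n_k}$ is carried by $S$,
\[
\langle F_j|_S,\bar B_{n_k}\rangle=\int F_j\,\dif\bar B_{n_k}\longrightarrow\int F_j\,\dif\bW ,
\]
while also $\langle F_j|_S,\bar B_{n_k}\rangle=\langle g_j,\bar B_{n_k}\rangle+\langle D,\bar B_{n_k}\rangle\to L_{g_j}$, whence $\bW(V)\leq\int F_j\,\dif\bW=L_{g_j}$. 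Intersecting the corresponding a.s.\ events over $j\in\N$ and using $L_{g_j}\downarrow L$ yields $\bW(V)\leq L=\lim_n\langle h,\bar B_n\rangle$ simultaneously for every subsequential weak limit $\bW$, which is the lemma.

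To prove the claim, apply the probabilistic Fatou theorem to $F_j|_{\MB}$ to get $g_j(X_n)\to F_j(X_\infty)$ a.s.; since also $F_j(X_n)\to F_j(X_\infty)$ by continuity of $F_j$ and convergence of $X_n$ to $X_\infty$ in $\cM$, we obtain $D(X_n)\to 0$ a.s., hence $\sup_{k\geq n}|D(X_k)|\downarrow 0$ a.s. Consider the superharmonic envelope $w(x):=\bE_x\big[\sup_{k\geq 0}|D(X_k)|\big]$: it is non-negative, bounded by $\|D\|_\infty$, dominates $|D|$ pointwise (the $k=0$ term), and satisfies $Pw(x)=\bE_x\big[\sup_{k\geq 1}|D(X_k)|\big]\leq w(x)$, so it is superharmonic. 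By Lemma~\ref{lem:martingale} and Corollary~\ref{cor:martingale_convergence}, $\langle w,\bar B_n\rangle$ is a non-negative supermartingale converging a.s.\ to some $L_w\geq 0$; and by the first–moment identity from the proof of Lemma~\ref{lem:martingale} together with the Markov property and dominated convergence, $\E\langle w,\bar B_n\rangle=\bE_o[w(X_n)]=\bE_o\big[\sup_{k\geq n}|D(X_k)|\big]\to 0$, so Fatou's lemma gives $L_w=0$ a.s. Since $0\leq\langle|D|,\bar B_n\rangle\leq\langle w,\bar B_n\rangle$, the claim follows.

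The one genuinely delicate point is this claim $\langle D,\bar B_n\rangle\to 0$. The naive approach — finding an open neighbourhood of $V$ on which $h\geq 1-\eps$, so that $\bar B_n$ of that neighbourhood is controlled by $\langle h,\bar B_n\rangle$ — fails because the harmonic extension of $\mathbbm{1}_V$ need not even be lower semicontinuous on $\cM$; it is precisely to circumvent this that we pass to continuous majorants and route through the probabilistic Fatou theorem via the superharmonic envelope $w$. Everything else — uniform integrability of $((\bar\mu)^{-n}|B_n|)_n$, a.s.\ convergence of (super)martingales via Corollary~\ref{cor:martingale_convergence}, and the elementary behaviour of weak convergence on the compact metric space $\cM$ — is routine.
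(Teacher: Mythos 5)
Your proof is correct, but it follows a genuinely different route from the paper's. The paper picks an open neighbourhood $U\supseteq V$ with $\bP_o(X_n \text{ ever visits } U)\leq h(o)+\eps$ and works with the single superharmonic function $f(x)=\bP_x(X_n \text{ ever visits } U)$, which simultaneously dominates $\mathbbm{1}_U$ on $S$ (so the portmanteau theorem bounds $\bW(V)\leq\bW(U)$ by $\lim_n\langle f,\bar B_n\rangle$) and is $\eps$-close to $h$ at $o$ (so Markov's inequality applied to the nonnegative supermartingale $\langle f-h,\bar B_n\rangle$ finishes the job as $\eps,\delta\to0$). You instead approximate $\mathbbm{1}_V$ from above by continuous functions $F_j$ on $\cM$ and prove the stronger intermediate claim that $\langle F_j|_S-g_j,\bar B_n\rangle\to0$ a.s., where $g_j$ is the harmonic extension of $F_j$; your proof of that claim — the probabilistic Fatou theorem giving $D(X_k)\to0$, the superharmonic envelope $w(x)=\bE_x[\sup_{k\geq0}|D(X_k)|]$, and the first-moment identity $\E\langle w,\bar B_n\rangle=\bE_o[\sup_{k\geq n}|D(X_k)|]\to0$ — is correct and is arguably of independent interest, since it says the branching random walk eventually averages the boundary values of a continuous function, not merely its harmonic extension; weak convergence against the continuous $F_j$ then replaces the portmanteau step. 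The one caveat is that you invoke uniform integrability of $(\bar\mu)^{-n}|B_n|$ (Kesten--Stigum, i.e.\ the $L\log L$ condition) to identify $\inf_j L_{g_j}=\lim_n\langle h,\bar B_n\rangle$. This is legitimate under the section's standing hypotheses, but the paper's proof of this lemma deliberately avoids $L\log L$ (it is used only for the expectation identity \eqref{eq:ExpectationIdentity}, as the paper notes just before the proof of Theorem~\ref{thm:convergence}), and your use of it is avoidable: since $g_j-h$ is a nonnegative harmonic function, Fatou gives $\E[L_{g_j}-L]\leq g_j(o)-h(o)\to0$, and a Markov-inequality argument (exactly the paper's $\eps$--$\delta$ step) identifies the limits without any uniform integrability, recovering the full generality the paper emphasizes.
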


\begin{proof}[Proof of Lemma~\ref{lem:closed_upper}]
We have by continuity of measure that for each $\eps>0$ there exists an open neighbourhood $U$ of $V$ in $\mathcal{M}$ such that 
  \[\bP_o(X_n \text{ ever vists $U$}) \leq \bP_o(X_\infty \in V) + \eps.\]
(Here we are implicitly using that, since $\MB$ is metrisable, each closed set can be written as the intersection of countably many open sets, namely the $1/k$ neighbourhoods of the set in some compatible metric.)  Fix one such choice of $\eps$ and $U$ and let $f:S\to[0,1]$ be the function
  \[
f(x):= \bP_x(X_n \text{ ever visits $U$}).
  \]
  Note that $f$ is harmonic on $S\setminus U$, superharmonic on $S \cap U$, and satisfies $f\geq h$ and $f\geq \mathbbm{1}_U$ everywhere. Since $\langle f,\bar B_n\rangle$ is a non-negative supermartingale, it converges a.s., and since $U$ is open we have by the portmanteau theorem that
  \begin{align*}
\sup\{\mathbf{W}(V) : \mathbf{W} &\text{ is a subsequential weak limit of $\bar B_n$}\} \\&\hspace{3cm}\leq \sup\{\mathbf{W}(U) : \mathbf{W} \text{ is a subsequential weak limit of $\bar B_n$}\} \\&\hspace{3cm}\leq  \limsup_{n\to\infty}\langle \mathbbm{1}_U, \bar B_n\rangle  \leq \limsup_{n\to\infty}\langle f, \bar B_n\rangle = \lim_{n\to\infty} \langle f, \bar B_n\rangle.
  \end{align*}
  On the other hand, since $f-h$ is non-negative, superharmonic, and satisfies $f(o)-h(o)\leq \eps$ we also have that
  \[
\E_o \left[\lim_{n\to\infty} \langle f-h, \bar B_n\rangle\right] \leq f(o)-h(o) \leq \eps
  \]
  and hence by Markov's inequality that
  \begin{multline*}
\P\left(\sup\{\mathbf{W}(V) : \mathbf{W} \text{ is a subsequential weak limit of $\bar B_n$}\} \geq \lim_{n\to\infty} \langle h, \bar B_n\rangle + \delta\right)
\\ \leq 
\P\left(\lim_{n\to\infty} \langle f, \bar B_n\rangle \geq \lim_{n\to\infty} \langle h, \bar B_n\rangle + \delta\right)  \leq \frac{\eps}{\delta}
  \end{multline*}
  for every $\delta>0$. Since $\eps,\delta>0$ were arbitrary it follows that
  \[
\sup\{\mathbf{W}(V) : \mathbf{W} \text{ is a subsequential weak limit of $\bar B_n$}\} \leq \lim_{n\to\infty} \langle h, \bar B_n\rangle \text{ almost surely}
\]
as claimed.
\end{proof}


We next apply this one-sided estimate for closed sets to deduce an \emph{equality} holding for each open set, closed set, and continuous function.

\begin{Lemma}
\label{lem:openclosedcontinuous}
\hspace{1cm}
\begin{enumerate}
    \item Let $A \subseteq \MB$ be either open or closed and let $h(x):=\bP_x(X_\infty \in U)$ be the harmonic extension of $\mathbbm{1}_U$ to $S$. The event ``every subsequential limit $\bW$ of $\bar B_n$ satisfies $\bW(A) = \lim_{n\to\infty}\langle h, \bar B_n \rangle$'' has probability one.
  \item
Let $\phi:\MB \to \R$ be continuous and let $h$ be the harmonic extension of $\phi$ to $S$. The event ``every subsequential limit $\bW$ of $\bar B_n$ satisfies $\int \phi(\xi) \dif \bW(\xi) = \lim_{n\to\infty}\langle h, \bar B_n \rangle$'' has probability one.
\end{enumerate}
\end{Lemma}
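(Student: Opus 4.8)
The plan is to bootstrap from Lemma~\ref{lem:closed_upper} in two stages: first upgrade the one-sided closed-set bound to a two-sided equality for open and closed sets, then pass from indicators to general continuous functions by linearity and approximation.

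\medskip

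\textbf{Step 1: equality for closed and open sets.}
Let $V\subseteq\MB$ be closed and let $h(x)=\bP_x(X_\infty\in V)$. Applying Lemma~\ref{lem:closed_upper} to $V$ gives $\sup\{\bW(V):\bW \text{ subseq.\ limit}\}\leq \lim_n\langle h,\bar B_n\rangle$ almost surely. For the reverse inequality, apply Lemma~\ref{lem:closed_upper} to the \emph{complementary} closed set — except that $\MB\setminus V$ is open, not closed. To get around this, I would instead note that the total mass satisfies $\langle \mathbbm{1},\bar B_n\rangle = \bar B_n(S) + (\text{boundary mass})$; more precisely, write $\mathbbm{1} = h + h'$ where $h'(x)=\bP_x(X_\infty\in\MB\setminus V)$ is the harmonic extension of the indicator of the complementary set, and note $h+h'$ equals the constant function $1$ since $X_\infty\in\MB$ a.s. Then $\langle h,\bar B_n\rangle + \langle h',\bar B_n\rangle = \langle \mathbbm{1},\bar B_n\rangle = W_n := (\bar\mu)^{-n}|B_n|$, which converges a.s.\ to $W$. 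Now exhaust $\MB\setminus V$ from inside by closed sets $V_k\uparrow \MB\setminus V$ (possible since $\MB$ is metrisable, hence perfectly normal, so the open set $\MB\setminus V$ is an increasing union of closed sets); Lemma~\ref{lem:closed_upper} applied to each $V_k$, together with $\langle h',\bar B_n\rangle \geq \langle h_{V_k},\bar B_n\rangle$ and monotone convergence $h_{V_k}\uparrow h'$, gives that $\lim_n\langle h',\bar B_n\rangle \geq \sup_k \lim_n \langle h_{V_k},\bar B_n\rangle \geq \sup_k \sup\{\bW(V_k)\} $; combined with the portmanteau lower bound $\bW(\MB\setminus V)\leq \liminf \langle \mathbbm{1}_{\MB\setminus V},\bar B_n\rangle$ is the wrong direction — instead use that for the \emph{open} set $\MB\setminus V$, portmanteau gives $\bW(\MB\setminus V)\leq \liminf_n \langle \mathbbm{1}_{\MB\setminus V},\bar B_n\rangle \leq \liminf_n \langle h',\bar B_n\rangle$. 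Hence, for any subsequential limit $\bW$,
\[
\bW(V) = \bW(\MB) - \bW(\MB\setminus V) \geq W - \lim_n\langle h',\bar B_n\rangle = \lim_n \langle h,\bar B_n\rangle,
\]
using $\bW(\MB)=W$ (the limiting measure has total mass $W$ since $\bar B_n(S)\to 0$ when $M$ is transient, as each state is visited only finitely often in expectation... more carefully, $\bar B_n(\{x\})\to 0$ a.s.\ for each fixed $x$, so no mass escapes to the interior, giving $\bW(S)=0$ and $\bW(\MB)=W$). This is the heart of the argument and closes the loop: $\bW(V) = \lim_n\langle h,\bar B_n\rangle$ a.s.\ for closed $V$, and the same identity for open $A$ follows by taking complements once more.

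\medskip

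\textbf{Step 2: continuous functions.}
Given continuous $\phi:\MB\to\R$ with harmonic extension $h(x)=\bE_x[\phi(X_\infty)]$, first reduce to the bounded case (here $\phi$ is automatically bounded since $\MB$ is compact — recall $\cM$ is compact so $\MB$ is too). Write $\phi$ as a limit of simple functions: for $\eps>0$, partition the range of $\phi$ into finitely many intervals and set $\psi = \sum_j a_j \mathbbm{1}_{A_j}$ with $\|\phi-\psi\|_\infty\leq\eps$, where the $A_j = \phi^{-1}([a_j,a_{j+1}))$ are Borel. These $A_j$ need not be open or closed, so I would approximate each $A_j$ in $\bW$-measure and in the harmonic-extension sense by open/closed sets — but the cleanest route is to instead approximate $\phi$ \emph{uniformly} by finite linear combinations of the form $g = \sum_j b_j (\operatorname{dist}(\cdot,\xi_j)\wedge 1)$-type functions or, more simply, to use the Stone--Weierstrass theorem to write $\phi$ as a uniform limit of functions built from indicators of a generating algebra of open-and-closed-boundary sets. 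Most robustly: approximate $\phi$ uniformly from above and below by functions $\phi^\pm$ that are \emph{finite} positive/negative linear combinations of indicators of open sets and closed sets (possible since $\phi$ is uniformly continuous on the compact metric space $\MB$: cover by finitely many small balls and take a subordinate construction), apply Step~1 and linearity of $h\mapsto\langle h,\bar B_n\rangle$ to each, and sandwich:
\[
\textstyle\int \phi^-\,\dif\bW \leq \int\phi\,\dif\bW\leq\int\phi^+\,\dif\bW, \qquad \langle h_{\phi^-},\bar B_n\rangle \leq \langle h_\phi,\bar B_n\rangle\leq\langle h_{\phi^+},\bar B_n\rangle,
\]
with $\|\phi^\pm-\phi\|_\infty\leq\eps$ controlling both the $\bW$-error (by $\eps\, \bW(\MB) = \eps W$) and the martingale-limit error (by $\eps W$ as well, via the expectation identity $\E_o\lim_n\langle h_{\phi^+}-h_{\phi^-},\bar B_n\rangle \leq h_{\phi^+}(o)-h_{\phi^-}(o)\leq 2\eps$ and Markov's inequality). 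Taking $\eps\downarrow 0$ along a countable sequence and intersecting the probability-one events yields the claim.

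\medskip

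\textbf{Main obstacle.}
The genuinely delicate point is Step~1: Lemma~\ref{lem:closed_upper} is intrinsically one-sided (it only controls closed sets from above), and the ill-behaved nature of the Martin boundary — the harmonic extension of a continuous function need not be continuous, per the Remark — means one cannot naively play closed and open sets off against each other via continuous partitions of unity. The resolution is the conservation-of-mass identity $\langle\mathbbm{1},\bar B_n\rangle = W_n \to W = \bW(\MB)$ (which itself requires knowing $\bW$ charges no interior point, i.e.\ transience forces $\bar B_n(\{x\})\to 0$), turning an upper bound on $\bW(\MB\setminus V)$ into a lower bound on $\bW(V)$. After Step~1 is in hand, Step~2 is a routine uniform-approximation argument, with the only care needed being that the approximants have harmonic extensions controlled in sup-norm — which follows automatically since $\|h_\psi\|_\infty\leq\|\psi\|_\infty$ for harmonic extensions of bounded functions.
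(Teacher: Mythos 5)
Your overall route is essentially the paper's: the one-sided bound of Lemma~\ref{lem:closed_upper} for closed sets, an upper bound for open subsets of $\MB$ by exhausting them with closed sets, and then complementation plus conservation of the total mass $W$ to upgrade the upper bounds to equalities; your Step~2 is a discretised version of the paper's layer-cake argument ($\phi=\int_0^1\mathbbm{1}_{V_t}\,\dif t$ together with Fatou), and is sound. However, one step of your Step~1, as written, fails. The set $\MB\setminus V$ is open in $\MB$ but not in $\cM$ (indeed $\MB$ is closed in $\cM$), so the portmanteau theorem for the measures $\bar B_n$, which live on $\cM$ and are supported on $S$, does not apply to it; worse, $\langle \mathbbm{1}_{\MB\setminus V},\bar B_n\rangle=0$ for every $n$ precisely because $\bar B_n$ charges only $S$, so the chain $\bW(\MB\setminus V)\leq\liminf_n\langle\mathbbm{1}_{\MB\setminus V},\bar B_n\rangle\leq\liminf_n\langle h',\bar B_n\rangle$ would, if it were valid, force $\bW(\MB\setminus V)=0$ for every closed $V$, which is absurd. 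The inequality you actually need, namely $\bW(\MB\setminus V)\leq\lim_n\langle h',\bar B_n\rangle$ for every subsequential limit $\bW$, is already contained in the exhaustion you set up and then abandoned: for a fixed subsequential limit $\bW$, continuity of measure gives $\bW(\MB\setminus V)=\lim_k\bW(V_k)\leq\sup_k\lim_n\langle h_{V_k},\bar B_n\rangle\leq\lim_n\langle h',\bar B_n\rangle$. This is exactly the paper's derivation of its inequality \eqref{eq:open_upper}; you only need to finish that computation instead of invoking portmanteau for a boundary subset.

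A second, smaller point concerns the mass identity. You are right that the complementation step requires $\bW(\MB)=W$, i.e.\ $\bW(S)=0$ (a point the paper passes over without comment), but your justification is off: ``$\bar B_n(S)\to0$'' is false since $\bar B_n(S)=(\bar\mu)^{-n}|B_n|\to W$, and ``each state is visited only finitely often in expectation'' fails in the recurrent regime $\bar\mu>1/\rho$. The correct one-line reason is that $\E\sum_n\bar B_n(x)=\sum_n P^n(o,x)=G(o,x)<\infty$ by transience, so $\bar B_n(x)\to0$ almost surely for each fixed $x$; since each singleton of $S$ is open in $\cM$, portmanteau (now legitimately applied) gives $\bW(\{x\})=0$ for every subsequential limit, hence $\bW(S)=0$ and $\bW(\MB)=\bW(\cM)=\lim_n\langle 1,\bar B_n\rangle=W$. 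With these two repairs your argument is correct and coincides in substance with the paper's proof, the only genuine difference being that in Step~2 you replace the integral layer-cake plus Fatou by a finite sandwich of indicator combinations, which works equally well since Step~1 already yields exact equalities for open and closed sets.
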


\begin{proof}[Proof of Lemma~\ref{lem:openclosedcontinuous}]
We begin with claim $1$. We first prove that if $A=U \subseteq \MB$ is open then 
  \begin{equation}
  \label{eq:open_upper}
\sup\{\mathbf{W}(U) : \mathbf{W} \text{ is a subsequential weak limit of $\bar B_n$}\} \leq \lim_{n\to\infty} \langle h, \bar B_n\rangle 
\end{equation}
almost surely,
where $h$ is the harmonic extension of $\mathbbm{1}_U$ to $S$. Since $\MB$ is metrisable, the open set $U$ may be written as a countable union of closed sets $U = \bigcup_{m\geq 1} V_m$. (For example, if $d$ is a compatible metric on $\MB$ then $U$ can be written as the union of the sequence of closed sets $V_m = \{\xi \in \MB: d(\xi,U^c) \geq 1/m\}$.) Fix one such representation $U=\bigcup_{m\geq 1} V_m$ and for each $m\geq 1$ let $h_m(x):= \bP_x(X_\infty \in V_m)$ be the harmonic extension of $\mathbbm{1}_{V_m}$ to $S$. Since $h_m\leq h$ for each $m\geq 1$, we have by continuity of measure and 
Lemma~\ref{lem:closed_upper} that
  \begin{align*}
&\sup\{\mathbf{W}(U) : \mathbf{W} \text{ a subsequential weak limit of $\bar B_n$}\}\\ 
&\hspace{6cm}=
\sup_m \sup\{\mathbf{W}(V_m) : \mathbf{W} \text{ a subsequential weak limit of $\bar B_n$}\} \\
&\hspace{6cm}\leq \sup_m \lim_{n\to\infty} \langle h_m, \bar B_n\rangle \leq \lim_{n\to\infty} \langle h, \bar B_n\rangle
\end{align*}
almost surely as claimed. Now, if $U$ is open then $U^c=\MB\setminus U$ is closed and the harmonic extension of $\mathbbm{1}_{U^c}$ to $S$ is given by $1-h$. Thus, it follows from Lemma~\ref{lem:closed_upper} that
\begin{equation}
\label{eq:1minusU}
\sup\{\mathbf{W}(U^c) : \mathbf{W} \text{ is a subsequential weak limit of $\bar B_n$}\} \leq \lim_{n\to\infty} \langle 1-h, \bar B_n\rangle
\end{equation}
almost surely, 
and since $\int 1 \dif \bW(\xi) = \lim_{n\to \infty} \langle 1,\bar B_n\rangle = W<\infty$ almost surely for every subsequential weak limit $\bW$ of $\bar B_n$, we can rearrange \eqref{eq:1minusU} to deduce that
\begin{equation}
  \label{eq:open_lower}
\inf\{\mathbf{W}(U) : \mathbf{W} \text{ is a subsequential weak limit of $\bar B_n$}\} \geq \lim_{n\to\infty} \langle h, \bar B_n\rangle 
\end{equation}
almost surely. Together \eqref{eq:open_upper} and \eqref{eq:open_lower} imply the open case of the claim, and the closed case follows by taking complements as in the proof of \eqref{eq:1minusU}.

\medskip

We now turn to the second claim.
We may assume without loss of generality that $\phi$ takes values in $[0,1]$. For each $t\in [0,1]$ let $V_t = \{\xi \in \MB: \phi(\xi) \geq t\}$ and let $h_t(x):=\bP_x(X_\infty \in V_t)$ be the harmonic extension of $\mathbbm{1}_{V_t}$ to $S$, so that $\phi = \int_0^1 \mathbbm{1}_{V_t} \dif t$ and $h=\int_0^1 h_t \dif t$. We have by Lemma~\ref{lem:closed_upper} that 
\begin{align}
&\sup\left\{\int \phi(\xi) \dif \mathbf{W}(\xi) : \mathbf{W} \text{ is a subsequential weak limit of $\bar B_n$}\right\}
\nonumber\\
&\hspace{5cm}\leq \int_0^1 \sup\left\{\bW(V_t) : \mathbf{W} \text{ is a subsequential weak limit of $\bar B_n$}\right\} \dif t 
\nonumber\\&\hspace{5cm}\leq \int_0^1 \lim_{n\to\infty} \langle h_t, \bar B_n \rangle \dif t \leq \lim_{n\to\infty} \langle h, \bar B_n \rangle,
\label{eq:phi1}
\end{align}
almost surely, 
where we have used Fatou's lemma in the final inequality. Since $\phi$ takes values in $[0,1]$ we can define a continuous function $(1-\phi):\MB\to [0,1]$, which has harmonic extension to $S$ given by $1-h$, and we have by symmetry that
\begin{equation}
\label{eq:1minusphi}
\sup\left\{\int (1-\phi(\xi)) \dif \mathbf{W}(\xi) : \mathbf{W} \text{ is a subsequential weak limit of $\bar B_n$}\right\} \leq \lim_{n\to\infty} \langle 1-h, \bar B_n \rangle.
\end{equation}
Since $\int 1 \dif \bW(\xi) = \lim_{n\to \infty} \langle 1,\bar B_n\rangle = W$ for every subsequential weak limit $\bW$ of $\bar B_n$, we can rearrange \eqref{eq:1minusphi} to deduce that
\begin{equation}
\inf\left\{\int \phi(\xi) \dif \mathbf{W}(\xi) : \mathbf{W} \text{ is a subsequential weak limit of $\bar B_n$}\right\} \geq \lim_{n\to\infty} \langle h,\bar B_n \rangle
\label{eq:phi2}
\end{equation}
almost surely. The claim follows from \eqref{eq:phi1} and \eqref{eq:phi2}.
\end{proof}

The open-set case of Lemma~\ref{lem:openclosedcontinuous} has the following immediate corollary.
Recall that the \textbf{harmonic Martin boundary} $\partial \mathcal{M}_\mathrm{harm}$ is defined to be the support of the law of $X_\infty$ under $\bP_o$, that is, the smallest closed subset of $\MB$ for which $\bP_o(X_\infty \in \MBh)=1$.

\begin{Corollary}
\label{cor:support}
The event ``every subsequential limit of $ \bar B_n$ is supported on $\partial \mathcal{M}_\mathrm{harm}$" has probability one.
\end{Corollary}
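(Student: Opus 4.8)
The plan is to apply the open-set case of Lemma~\ref{lem:openclosedcontinuous} to the complement of the harmonic Martin boundary. Set $U := \MB \setminus \MBh$. Since $\cM$ is Polish, $\MB$ is second countable, so the support $\MBh$ of the law of $X_\infty$ under $\bP_o$ is well-defined and has full $\bP_o$-measure; hence $U$ is a relatively open subset of $\MB$ with $\bP_o(X_\infty \in U)=0$. Let $h(x):=\bP_x(X_\infty \in U)$ be the harmonic extension of $\mathbbm{1}_U$ to $S$. Then $h$ is non-negative and harmonic with $h(o)=\bP_o(X_\infty\in U)=0$; iterating harmonicity gives $0=h(o)=\sum_{y} P^n(o,y)h(y)$ for every $n$, and since each term is non-negative and $\sup_n P^n(o,y)>0$ for every $y\in S$ by our standing assumption, we conclude that $h\equiv 0$ on $S$.

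Now Lemma~\ref{lem:openclosedcontinuous}(1) applied with $A=U$ says that, almost surely, every subsequential weak limit $\bW$ of $\bar B_n$ satisfies $\bW(U)=\lim_{n\to\infty}\langle h,\bar B_n\rangle=0$. Thus, almost surely every subsequential limit $\bW$ assigns zero mass to $\MB\setminus\MBh$, which is the assertion of the corollary apart from the statement that $\bW$ also gives zero mass to the discrete set $S\subseteq\cM$. That remaining point falls outside the scope of Lemma~\ref{lem:openclosedcontinuous} (whose boundary data lives on $\MB$), but is easy separately: for each fixed $x\in S$ the identity $\E_o[B_n(x)]=(\bar\mu)^n\bP_o(X_n=x)$ gives $\sum_{n\geq 0}\E_o\langle\mathbbm{1}_{\{x\}},\bar B_n\rangle=G(o,x)<\infty$ by transience, so $\langle\mathbbm{1}_{\{x\}},\bar B_n\rangle\to 0$ almost surely; since $\{x\}$ is clopen in $\cM$, the portmanteau theorem yields $\bW(\{x\})=0$ for every subsequential limit $\bW$, and summing over the countable set $S$ gives $\bW(S)=0$ almost surely.

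I do not expect a genuine obstacle here: the content is immediate once one notices that the harmonic extension of $\mathbbm{1}_U$ vanishes identically. The only points that need (routine) care are the measure-theoretic fact that $\MBh$ carries full harmonic measure — which uses second countability of $\MB$ — and the separate verification that $\bW(S)=0$; both are standard.
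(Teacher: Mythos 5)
Your proof is correct and follows essentially the same route as the paper: apply the open-set case of Lemma~\ref{lem:openclosedcontinuous} to $U=\MB\setminus\MBh$, whose harmonic extension vanishes identically (the paper asserts this ``by definition''; your iteration of harmonicity plus irreducibility is exactly why it vanishes at every $x\in S$ and not just at $o$). Your extra verification that $\bW(S)=0$, via $\sum_n \E_o\langle \mathbbm{1}_{\{x\}},\bar B_n\rangle = G(o,x)<\infty$ and the portmanteau theorem, addresses a point the paper's one-line proof leaves implicit; it is correct, and can alternatively be obtained from the closed case of Lemma~\ref{lem:openclosedcontinuous} applied to $A=\MB$, which gives $\bW(\MB)=\lim_{n\to\infty}\langle 1,\bar B_n\rangle = W = \bW(\cM)$ and hence $\bW(S)=0$ for every subsequential limit.
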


\begin{proof}
The complement $\MBh^c$ is open and has harmonic extension $h(x)=\bP_x(X_\infty \in \MBh^c) \equiv 0$ by definition. The claim therefore follows immediately from Lemma~\ref{lem:openclosedcontinuous}.
\end{proof}

We next use the separability of $\MBh$ to exchange the order of the quantifiers in the second item of Lemma~\ref{lem:openclosedcontinuous}.

\begin{Lemma}
\label{lem:quantifier_exchange}
The event\footnote{Although we call this an event, it is not clear that it is Borel measurable. Still, the claim makes sense as one about the set being co-null, and we avoid dwelling on the point further. In fact this set is co-analytic (i.e., the complement of a continuous image of a Borel set on a Polish space), and is therefore universally measurable (i.e., measurable with respect to the completion of any Borel measure) by Lusin's theorem \cite[Theorem 21.10]{kechris2012classical}. Similar remarks apply to several other `events' we consider throughout the rest of the proof.} ``for every continuous function $\phi: \MBh \to [0,1]$ with harmonc extension $h$ to $S$, the sequence $\langle h,\bar B_n \rangle$ converges and every subsequential limit $\bW$ of $\bar B_n$ satisfies
$\int \phi(\xi)\dif\bW(\xi) = \lim_{n\to\infty}\langle h,\bar B_n \rangle$" has probability one.
\end{Lemma}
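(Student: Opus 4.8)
The plan is to derive Lemma~\ref{lem:quantifier_exchange} from the second part of Lemma~\ref{lem:openclosedcontinuous} together with Corollary~\ref{cor:support} by a routine exchange-of-quantifiers argument, exploiting that $\MBh$ is a \emph{compact} metric space (it is closed in the compact space $\MB=\M\setminus S$, recalling the standard fact that $S$ is open and discrete in the Martin compactification \cite{woess2000}) and that the space $C(\MBh;[0,1])$ of continuous $[0,1]$-valued functions on it is therefore separable in the supremum norm. First I would fix a countable family $\{\phi_k\}_{k\ge1}$ of continuous functions $\MB\to[0,1]$ whose restrictions to $\MBh$ are dense in $C(\MBh;[0,1])$; such a family exists because $C(\MB;[0,1])$ is separable and, by Tietze's theorem, every continuous $[0,1]$-valued function on the closed set $\MBh$ is the restriction of one on $\MB$. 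Writing $h_k$ for the harmonic extension of $\phi_k$ to $S$ — which agrees with the harmonic extension of $\phi_k|_{\MBh}$ since $X_\infty\in\MBh$ almost surely under $\bP_x$ for every $x$ by absolute continuity of the Martin kernels — I would let $\mathcal G$ be the intersection of the co-null event of Corollary~\ref{cor:support}, the co-null event $\{\langle 1,\bar B_n\rangle\to W\}$, and, for every $k\ge1$, the co-null event furnished by Lemma~\ref{lem:openclosedcontinuous}(2) applied to $\phi_k$ (which in particular includes that $\langle h_k,\bar B_n\rangle$ converges). Then $\mathcal G$ is Borel and co-null, and on $\mathcal G$ every subsequential weak limit $\bW$ of $\bar B_n$ — these exist since $\M$ is compact and the total masses $\langle 1,\bar B_n\rangle$ are bounded — is supported on $\MBh$ and has total mass $W$.

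Next I would work on a fixed outcome in $\mathcal G$ with an arbitrary continuous $\phi:\MBh\to[0,1]$ with harmonic extension $h$, choose indices $k_j$ with $\eps_j:=\|\phi_{k_j}-\phi\|_\infty\to0$, and use two uniform estimates. Since $h_{k_j}(x)-h(x)=\bE_x[(\phi_{k_j}-\phi)(X_\infty)]$ and $X_\infty\in\MBh$ almost surely under $\bP_x$, we get $\|h_{k_j}-h\|_\infty\le\eps_j$, hence $|\langle h_{k_j},\bar B_n\rangle-\langle h,\bar B_n\rangle|\le\eps_j\langle 1,\bar B_n\rangle$ for every $n$; and, for any subsequential weak limit $\bW$ of $\bar B_n$, $|\int\phi_{k_j}\,\dif\bW-\int\phi\,\dif\bW|\le\eps_j W$ since $\bW$ is supported on $\MBh$ with total mass $W$. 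Combining the first estimate with $\langle 1,\bar B_n\rangle\to W$ shows that $(\lim_n\langle h_{k_j},\bar B_n\rangle)_j$ is Cauchy (its $i$-th and $j$-th terms differ by at most $(\eps_i+\eps_j)W$), hence converges to a finite limit $L$, and that $\limsup_n\langle h,\bar B_n\rangle$ and $\liminf_n\langle h,\bar B_n\rangle$ both lie within $\eps_j W$ of $\lim_n\langle h_{k_j},\bar B_n\rangle$; letting $j\to\infty$ gives $\langle h,\bar B_n\rangle\to L$. For any subsequential weak limit $\bW$, the second estimate together with Lemma~\ref{lem:openclosedcontinuous}(2) for the $\phi_{k_j}$ gives $\int\phi\,\dif\bW=\lim_j\int\phi_{k_j}\,\dif\bW=\lim_j\lim_n\langle h_{k_j},\bar B_n\rangle=L=\lim_n\langle h,\bar B_n\rangle$. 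As the outcome in $\mathcal G$ and the function $\phi$ were arbitrary, the set described in the lemma contains $\mathcal G$; even though this set need not itself be Borel (as flagged in the footnote), it contains a Borel co-null set and is therefore co-null, which is exactly the assertion.

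I do not anticipate a genuine obstacle: this is a standard quantifier exchange, and its three ingredients — separability of $C(\MBh;[0,1])$, the contraction $\|h_{k_j}-h\|_\infty\le\|\phi_{k_j}-\phi\|_\infty$ for harmonic extensions, and the common finite total mass $W$ shared by all subsequential weak limits (all supported on $\MBh$) — fit together without friction. The only points that want a little care are: verifying that $\MBh$ is compact so that $C(\MBh;[0,1])$ really is separable; assigning a harmonic extension to a function given only on $\MBh$, handled either directly via $h(x)=\bE_x[\phi(X_\infty)]$ using $X_\infty\in\MBh$ a.s.\ or by a Tietze extension to $\MB$; and the minor measurability caveat about the ``event'' in the statement, already noted in the footnote.
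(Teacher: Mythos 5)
Your proposal is correct and follows essentially the same route as the paper: separability of $C(\MBh)$, a single co-null event built from Corollary~\ref{cor:martingale_convergence}/Corollary~\ref{cor:support} and Lemma~\ref{lem:openclosedcontinuous} applied to a countable dense family, and then a uniform approximation argument using $\|h-h_k\|_\infty\le\|\phi-\phi_k\|_\infty$ together with the common total mass $W$ of all subsequential limits. The only differences are cosmetic refinements on your part (Tietze extension from $\MBh$ to $\MB$ and the explicit appeal to Corollary~\ref{cor:support} so that $\int\phi\,\dif\bW$ makes sense for $\phi$ defined only on $\MBh$), which tidy up details the paper passes over quickly.
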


\begin{proof}
The Martin compactification $\MB$ is compact and metrisable, and since $S$ is dense in $\MB$ it is separable also. Since $\MBh$ is a closed subset of $\MB$, it is also a compact, separable, metrisable space. As such, the space $C(\MBh)$ of continuous functions on $\MBh$ is separable also. (Indeed, if $d$ is a metric compatible with the topology of $\MBh$ and $Q$ is a countable dense subset of $\MBh$ then the $\mathbb{Q}$-algebra generated by the continuous functions of the form $\{d(\,\cdot\,,q) : q \in Q\}$ is a countable dense subset of $C(\MBh)$.) 

\medskip

Let $A$ be a countable dense subset of $C(\MBh)$. We have by Corollary~\ref{cor:martingale_convergence} and Lemma~\ref{lem:openclosedcontinuous} that the event 
$\Omega=\{W=\lim_{n\to\infty}\langle 1,\bar B_n \rangle$ exists and is finite, and for each function $\phi\in A$ with harmonic extension $h$ the sequence $(\langle h,\bar B_n \rangle)_{n\geq 0}$ converges and every subsequential weak limit $\bW$ of $\bar B_n$ satisfies $\int \phi(\xi)\dif\bW(\xi)=\lim_{n\to\infty}\langle h,\bar B_n \rangle\}$ has probability one. 

\medskip

Now, if $\psi$ is an \emph{arbitrary} continuous function on $\MB$, then for each $\eps>0$ there exists $\phi\in A$ with $|\phi-\psi|\leq \eps$, so that if $h$ and $h'$ denote the harmonic extensions of $h$ and $h'$ to $S$ then $|h-h'|\leq \eps$. Thus, on the event $\Omega$ we have that
\begin{align*}
&\sup\left\{\int \psi(\xi) \dif \bW(\xi) : \bW \text{ a subsequential limit of $\bar B_n$}\right\} 
\\&\hspace{5cm}\leq \eps W + \sup\left\{\int \phi(\xi) \dif \bW(\xi) : \bW \text{ a subsequential limit of $\bar B_n$}\right\}
\\&\hspace{5cm}= \eps W + \lim_{n\to\infty} \langle h',\bar B_n \rangle \leq 2\eps W + \liminf_{n\to\infty} \langle h,\bar B_n \rangle,
\end{align*}
where the final inequality follows since $|\langle h-h',\bar B_n \rangle| \leq \eps \langle 1,\bar B_n \rangle$ for every $n\geq 0$.
We also have symmetrically that
\begin{align*}
&\inf\left\{\int \psi(\xi) \dif \bW(\xi) : \bW \text{ a subsequential limit of $\bar B_n$}\right\} 
\\&\hspace{5cm}\geq -\eps W + \inf\left\{\int \phi(\xi) \dif \bW(\xi) : \bW \text{ a subsequential limit of $\bar B_n$}\right\}
\\&\hspace{5cm}= -\eps W + \lim_{n\to\infty} \langle h',\bar B_n \rangle \geq -2\eps W + \limsup_{n\to\infty} \langle h,\bar B_n \rangle,
\end{align*}
yielding that the chain of inequalities
\begin{align*}
-2\eps W + \limsup_{n\to\infty} \langle h,\bar B_n \rangle &\leq \inf\left\{\int \psi(\xi) \dif \bW(\xi) : \bW \text{ a subsequential limit of $\bar B_n$}\right\} \\&\leq\sup\left\{\int \psi(\xi) \dif \bW(\xi) : \bW \text{ a subsequential limit of $\bar B_n$}\right\} \\
&\leq 2\eps W + \liminf_{n\to\infty} \langle h,\bar B_n \rangle.
\end{align*}
holds pointwise on the event $\Omega$. The claim follows since $\psi$ and $\eps>0$ were arbitrary.
\end{proof}

We may now conclude the proof of the main theorem. Note that we have not yet used the $L\log L$ condition. This will be needed only to verify the identity \eqref{eq:ExpectationIdentity}; if the $L\log L$ condition does \emph{not} hold then $\bar B_n$ converges to the zero measure!

\begin{proof}[Proof of Theorem~\ref{thm:convergence}]
We have by Corollary~\ref{cor:support} and Lemma~\ref{lem:quantifier_exchange} that the event ``
every subsequential weak limit of $\bar B_n$ is supported on $\MBh$, and if $\phi:\MBh\to \R$ is continuous then $\int \phi(\xi)\dif\bW_1(\xi)=\int \phi(\xi)\dif \bW_2(\xi)$ for any two subsequential weak limits $\bW_1$, $\bW_2$ of $\bar B_n$'' holds almost surely. (Again, it is very important that the quantifier over all continuous functions and subsequential limits is \emph{inside} the event being defined!)
 The Riesz-Markov-Kakutani representation theorem implies that measures on a compact metric space are determined by their integrals against continuous functions, so that all subsequential weak limits of $\bar B_n$ are identical almost surely. Since the space of measures on $\mathcal{M}$ is a compact metrisable space under the weak topology, it follows that $\bar B_n$ converges almost surely to a (random) limit measure $\bW$ as claimed. 

Now, if the offspring measure $\mu$ satisfies the $L\log L$ condition then we have by the Kesten-Stigum theorem that the martingale $(\langle 1,\bar B_n \rangle)_{n\geq 0}$ is uniformly integrable. As such, if the $L\log L$ condition holds then $(\langle h,B_n \rangle)_{n\geq 0}$ is a uniformly integrable martingale for each bounded harmonic function $h$ and we have by Lemma~\ref{lem:openclosedcontinuous} that if $\phi: \MB\to \R$ is a continuous with harmonic extension $h$ then
\[
\E \left[\int \phi(\xi)\dif\bW(\xi)\right] = \E \left[\lim_{n\to\infty} \langle h,\bar B_n \rangle\right] = \langle h,\bar B_0\rangle = \bE_o \left[ \phi(X_\infty)\right].
\]
This implies that the expectation of $\bW$ and the law of $X_\infty$ determine the same measure on $\MB$, which is equivalent to the claimed equality \eqref{eq:ExpectationIdentity}.
\end{proof}

\section{Open problems}
\label{sec:problems}

As mentioned in Remark~\ref{remark:tail}, since $W$ generates the tail $\sigma$-algebra of the branching process $(|B_n|)_{n\geq 0}$ \cite{Lootgieter,Overbeck} and $X_\infty$ generates the invariant $\sigma$-algebra of the underlying random walk, it is natural to wonder whether the random measure $\bW$ generates the tail $\sigma$-algebra of the branching random walk. Unfortunately this is not the case even in some very simple examples. Indeed, consider branching random walk on $\mathbb{Z}^3$ with deterministic branching governed by the degenerate offspring measure $\mu(16)=1$. Since the Martin boundary of $\mathbb{Z}^3$ consists of a single point $\{\mathbf{1}\}$ and $W=16$, $\bW$ is deterministically equal to a point mass of mass $16$ at $\mathbf{1}$. On the other hand, if we start with a single particle at the origin, then the number of particles at $(n,0,0)$ at time $n$ is itself a branching process with Binomial$(16,1/8)$ offspring distribution. Since this offspring distribution is supercritical but has positive probability to die after a single step, it survives forever with probability strictly between $0$ and $1$. As such, $\{B_n(n,0,0) >0$ for infinitely many $n\}$ is a non-trivial tail event that is not in the $\sigma$-algebra generated by $\bW$. Similarly, $\{
\max\{x:B_n(x,0,0)>0\} > - \max \{x: B_n(-x,0,0)>0\}$ for all sufficiently large $n\}$ is a non-trivial \emph{invariant event} for the branching random walk that does not belong to the $\sigma$-algebra generated by $\bW$. This examples suggests that the following problem is non-trivial.

\begin{Problem}
Characterise the $\sigma$-algebra generated by the limiting measure $\bW$.
\end{Problem}

As discussed in the introduction, much of the existing literature on the boundary behaviour of branching random walk focuses on the set of accumulation points of the walk \cite{MR1452555,MR1641015,sidoravicius2020limit}, and it would be interesting to reinterpret these analyses through the lens of Theorem~\ref{thm:convergence}. To do this, an important first step would be to compare the set of limit points with the support of $\bW$. Again, simple examples show that these are \emph{not} always the same: If one performs a supercritical branching random walk on the lamplighter group $\mathbb{Z}_2 \wr \mathbb{Z}$ then every vertex is visited infinitely often, so that the set of limit points is the entire Martin boundary, which has more than one point by, say, the results of \cite{amir2017every}. Meanwhile, the measure $\bW$ is supported on the harmonic Martin boundary, which in this case is the single point $\{\mathbf{1}\}$ since $\mathbb{Z}_2 \wr \mathbb{Z}$ is Liouville. Still, we expect that in `sufficiently nice' examples, such as hyperbolic groups, the two sets will coincide.

\begin{Problem}
Give conditions under which the support of $\bW$ coincides almost surely with the set of accumulation points of $B$.
\end{Problem}


For hyperbolic groups there are many further questions one could ask about the limiting measure $\bW$, mirroring what is known about the limit set. For example one can try to compute the Hausdorff dimension of the measure, or study the tail probability that an unusually large mass is placed on a small ball.

\begin{Problem}
Study the fine properties of the limiting measure $\bW$ for branching random walk on a hyperbolic group. 
\end{Problem}

Much of the interest in branching random walk on \emph{nonamenable groups} stems from the fact that there are \emph{two} phase transitions: The transition between certain death and possible survival when $\bar \mu = 1$ and the transition between transience and recurrence when $\bar \mu = 1/\rho$, where $\rho$ is the spectral radius of the random walk: when $\bar \mu \leq 1/\rho$ each point is visited at most finitely often almost surely while when $\bar \mu > 1/\rho$ every point is visited infinitely often almost surely on the event that the walk survives forever \cite{MR1254826,bertacchi_zucca,gantert_mueller}. It is likely that the answers to each of the questions above are different in the two cases $\bar \mu \leq 1/\rho$ and $\bar \mu > 1/\rho$, with the possibility of particularly interesting behaviour at or near the boundary case $\bar \mu = 1/\rho$.

It is also natural to wonder how this phase transition manifests itself in the behaviour of the limiting measure $\bW$. One plausible scenario is as follows: For random walks on nonamenable groups, the random measure $\bW$ is always singular with respect to harmonic measure (i.e.\ the law of $X_\infty$), but is, in some qualitative sense, `more singular' in the transient regime $\bar \mu \leq 1/\rho$ than in the recurrent regime $\bar \mu > 1/\rho$. While we are not confident enough to make a formal conjecture, it is also plausible that $\bW$ is almost surely supported on a strict subset of $\MBh$ whenever $\bar \mu \leq 1/\rho$. This is true for hyperbolic groups by the results of \cite{sidoravicius2020limit}. The results of \cite{Hutchcroft-NonIntersectionBRW} also suggest that the support of $\bW$ is always totally disconnected in the transient regime.

\subsection*{Acknowledgments} 
This work was initiated while TH was a senior research associate at the University of Cambridge, during which time he was supported by ERC starting grant  804166 (SPRS).
EC was supported by the project ``Programma per Giovani Ricercatori Rita Levi Montalcini'' awarded by the Italian Ministry of Education.
EC also acknowledges partial support by ``INdAM--GNAMPA Project 2019'' and ``INdAM--GNAMPA Project 2020''.

\footnotesize{

\nocite{}
\bibliographystyle{amsalpha}
\bibliography{bibliography2}

}

\end{document}